\def\<{\langle}
\def\>{\rangle}
\newcommand{\ba}{\[\begin{aligned}}
\newcommand{\ea}{\end{aligned}\]}
\newcommand\mnote[1]{} 
\newcommand{\beq}[1]{\begin{equation}\label{#1}}
\newcommand\eeq{\end{equation}}
\newcommand\ben{\begin{equation}}
\newcommand\een{\end{equation}}
\newcommand\bes{\begin{eqnarray*}}
\newcommand\ees{\end{eqnarray*}}
\newcommand\besn{\begin{eqnarray}}
\newcommand\eesn{\end{eqnarray}}
\def\bthm{\begin{theorem}}
\def\ethm{\end{theorem}}
\def\bdefn{\begin{definition}}
\def\edefn{\end{definition}}
\newcommand{\benu}{\begin{enumerate}\setlength\itemsep{6pt}}
\newcommand{\beit}{\begin{itemize}\setlength\itemsep{3pt}}
\def\eenu{\end{enumerate}}
\def\eeit{\end{itemize}}
\def\beds{\begin{description}}
\def\eeds{\end{description}}
\def\bepr{\begin{problem}}
\def\eepr{\end{problem}}
\def\bprf{\begin{proof}}
\def\eprf{\end{proof}}
\def\berk{\begin{remark}}
\def\eerk{\end{remark}}
\def\bex{\begin{exercise}}
\def\eex{\end{exercise}}
\def\beg{\begin{example}}
\def\eeg{\end{example}}
\def\suchthat{{\; : \;}}
\def\PP{{\mathcal P}}
\def\N{\mathbb{N}}
\newcommand{\sm}{{\raise0.3ex\hbox{$\scriptstyle \setminus$}}}
\def\alp{\alpha}
\renewcommand\phi{\varphi}
\theoremstyle{plain} 
    \newtheorem{theorem}{Theorem}
\theoremstyle{definition} 
    \newtheorem{definition}[theorem]{Definition}
    \newtheorem{exercise}[theorem]{Exercise}
    \newtheorem{problem}[theorem]{Problem}
        \newtheorem{remark}[theorem]{Remark}
    \newtheorem{example}[theorem]{Example}
\renewcommand{\bex}{\indent\begin{exercise}}
\renewcommand\subset{\subseteq}
\begin{document}

\title{On Hadamard powers of positive semi-definite matrices}
\author[J.~S.~Baslingker]{Jnaneshwar Baslingker}
\author[B. Dan]{Biltu Dan}
\address{Department of Mathematics, Indian Institute of Science, Bangalore-560012, India.}
\email{jnaneshwarb@iisc.ac.in, biltudan@iisc.ac.in}
\date{}
\begin{abstract}
Consider the set of scalars $\alpha$ for which the $\alpha$th Hadamard power of any $n\times n$ positive semi-definite (p.s.d.) matrix with non-negative entries is p.s.d. It is known that this set is of the form $\{0, 1, \dots, n-3\}\cup [n-2, \infty)$. A natural question is ``what is the possible form of the set of such $\alpha$ for a fixed p.s.d. matrix with non-negative entries?". In all examples appearing in the literature, the set turns out to be union of a finite set and a semi-infinite interval. In this article, examples of matrices are given for which the set consists of a finite set and more than one disjoint interval of positive length. In fact, it is proved that the number of such disjoint intervals can be made arbitrarily large, by giving explicit examples of matrices.
The case when the entries of the matrices are not necessarily non-negative is also considered. 
\end{abstract}
\keywords{ Positive semi-definite, Hadamard power}
\subjclass[2010]{15B48, 15A45 }
\maketitle
\section{Introduction}

Entrywise functions of matrices preserving positive semi-definiteness has been a topic of active research. The long history of this field starts with the Schur product theorem (see \cite{HJ}), followed by the works of Schoenberg\cite{IS}, Rudin\cite{WR} and others. In particular, the study of entrywise power functions $x\rightarrow x^\alpha$ has been of special interest to several mathematicians (see~\cite{BE,FH77,GKB,hiai, jain}). By the Schur product theorem, the $m$th Hadamard power $A^{\circ m}:=[a_{ij}^m]$ of any p.s.d. matrix $A=[a_{ij}]$ is again p.s.d. for every positive integer $m$. Let $\PP_n^+$ denote the set of $n\times n$ p.s.d. matrices with non-negative entries. For $A=[a_{ij}]\in \PP_n^+$ and $\alpha \geq 0$, the $\alpha$th Hadamard power of $A$ is the matrix $A^{\circ \alpha}:=[a_{ij}^\alpha]$. We use the convention $0^0=1$. Now for $A\in \PP_n^+$, we define $$S_A:=\{\alp\ge 0\suchthat A^{\circ \alpha}\in \PP_n^+\}.$$ By the Schur product theorem we have that all the natural numbers belong to $S_A$ for every $A\in \PP_n^+$. Also, $0$ belongs to $S_A$ for every $A\in \PP_n^+$, as the matrix with all entries $1$ is p.s.d.  FitzGerald and Horn \cite{FH77} showed that $n-2$ is the `critical exponent' for such matrices, i.e., $n-2$ is the least number for which $A^{\circ \alpha}\in \PP_n^+$ for every $A\in \PP_n^+$ and for every $\alpha \ge n-2$. Thus $\bigcap_{A\in \PP_n^+}S_A=\{0,1,\dots, n-3\}\cup[n-2,\infty)$. They considered the matrix $A\in \PP_n^+$ with $(i,j)$th entry $1+\epsilon ij$ and showed that if $\alpha$ is not an integer and $0<\alpha<n-2$, then $A^{\circ \alpha}$ is not positive semi-definite for a sufficiently small positive number $\epsilon$. Later, Jain~\cite{jain} showed that this remains true if $\epsilon ij$ is replaced with $x_ix_j$ for any distinct positive real numbers $x_1, \dots , x_n$.

What is the possible form of the set $S_A$ for a fixed matrix $A\in\PP_n^+$? In all examples appearing in the literature, the set turns out to be the union of a finite set and a semi-infinite interval. Such sets have appeared in different areas like probability, complex analysis and representation theory, under the names Wallach set and Berezin-Wallach set. Gindikin set, which also has a similar structure, shows up in random matrix theory. For more details see Section $1.2$ of \cite{khare2020}. 

We first note down the following properties of $S_A$ for any $A\in\PP_n^+$ for any $n$.
\begin{enumerate}
\item $\N \cup \{0\} \subset S_A$.
\item If $\alpha, \beta\in S_A$, then $\alpha+\beta \in S_A$. Thus, $S_A$ is a monoid under addition.
\item If $[0, \delta] \subset S_A$ for some $\delta>0$, then the monoid property ensures that $S_A= [0, \infty)$.
\item $S_A$ has finitely many connected components (see Remark~\ref{rem:3}). One of the components is a semi-infinite interval containing $[n-2, \infty)$.
\item $S_A$ is a closed subset of $[0, \infty)$.
\end{enumerate} 
The first two properties follow from the Schur product theorem and the last one follows from the continuity of the smallest eigenvalue. 

In all examples appearing in the literature, for fixed $A\in\PP_n^+$, the set $S_A$ has only one interval (semi-infinite) component. The rest of the components are all singleton sets. We show that the structure of $S_A$ need not always be a semi-infinite interval together with a finite set. One can observe that if $A\in \PP_2^+$, then $S_A=[0,\,\infty)$. If $A\in \PP_3^+$, then $S_A=\{0\} \cup [\rho,\,\infty)$ for some $\rho\in[0,\,1]$. Indeed, it follows from Theorem 2.2 of~\cite{FH77} that $[1,\,\infty) \subset S_A$ and if $A^{\circ \alpha}$ is p.s.d. for $\alpha>0$, then $A^{\circ \beta}=(A^{\circ \alpha})^{\circ \beta/\alpha}$ is p.s.d. for any $\beta>\alpha$. For $n\geq 4$, we give examples of $A\in \PP_n^+$ such that $S_A$ has at least two interval\footnote{ By interval we always mean interval of positive length.} components.

We now consider the matrix \begin{align}\label{def:A_n} A_n:= [1+ x_ix_j]_{1\leq i,j\leq n}\end{align} for distinct positive real numbers $x_1,  x_2, \dots, x_n$. Theorem 1.1 of~\cite{jain} showed that $S_{A_n}= \{0, 1, \dots, n-3\}\cup [n-2, \infty)$. Let  $n\ge 4$. For $k\in\{2,3,\dots, n-2\}$ define
\begin{align*}
A_{n,k,\epsilon}:=A_n+ \epsilon R_{n,k}, \,\,\,\epsilon \ge 0,
\end{align*}
where $R_{n,k}$ is the $n\times n$ matrix with $1$ in the last $k$ diagonal entries and $0$ otherwise. Note that $A_{n,k,\epsilon}$ is  p.s.d. We prove that for small enough $\epsilon$, $S_{A_{n,k,\epsilon}}$ has at least $k$ interval components, one around each of the integers from $n-k-1$ to $n-3$ and the last one is the semi-infinite interval containing $n-2$.
\begin{theorem}\label{thm1}
Fix integers $n\geq 4$ and $2\le k\le n-2$, and let the matrix $A_n$ be as in~\eqref{def:A_n}. Let $\epsilon>0$ be sufficiently small (depending on $A_n$). Then there exists $\delta>0$ such that $[\ell-\delta, \ell+\delta]\subset S_{A_{n,k,\epsilon}}$ for any $\ell\in\{n-k-1, n-k,\dots,n-3\}$. Also, for each such $\ell$ there exists $\alpha_\ell\in (\ell, \ell+1)$ such that $\alpha_\ell\notin S_{A_{n,k,\epsilon}}$. Moreover, $S_{A_{n,k,\epsilon}} \cap [0, n-k-2] =\{0,1,\dots, n-k-2\}$.
\end{theorem}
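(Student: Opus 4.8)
The plan is to reduce everything to analyzing, for each relevant exponent $\alpha$, the smallest eigenvalue of $A_{n,k,\epsilon}^{\circ\alpha}$ as a function of the perturbation parameter $\epsilon$, and to exploit a first-order expansion in $\epsilon$ around the known matrix $A_n^{\circ\alpha}$. The key structural facts we lean on are: (i) by Jain's theorem $S_{A_n}=\{0,1,\dots,n-3\}\cup[n-2,\infty)$, so $A_n^{\circ\ell}$ is p.s.d. and \emph{singular} at each integer $\ell\in\{n-k-1,\dots,n-3\}$ while $A_n^{\circ\alpha}$ has a strictly negative eigenvalue for non-integer $\alpha\in(0,n-2)$; (ii) $A_{n,k,\epsilon}^{\circ\alpha}=A_n^{\circ\alpha}+\epsilon\,k\,\partial_\epsilon|_{0}(\cdots)+O(\epsilon^2)$, where the linear term is exactly $\alpha\cdot(A_n^{\circ(\alpha-1)}\circ R_{n,k})=\alpha\cdot R_{n,k}$ since $R_{n,k}$ is diagonal and $A_n$ has $1$'s on its diagonal. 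So for $\ell$ an integer, $A_{n,k,\epsilon}^{\circ\ell}=A_n^{\circ\ell}+\epsilon\,\ell\,R_{n,k}+\binom{\ell}{2}\epsilon^2 R_{n,k}+\cdots$ (exactly a finite binomial sum, since Hadamard powers of $R_{n,k}$ equal $R_{n,k}$), which is a genuinely PSD-improving perturbation at the singular matrix $A_n^{\circ\ell}$.

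First I would establish the three claims of the theorem separately. For the interval around each integer $\ell\in\{n-k-1,\dots,n-3\}$: we know $A_n^{\circ\ell}\succeq0$ with some kernel $V_\ell$. Adding $\epsilon\ell R_{n,k}$ (plus higher, positive, order-$\epsilon^2$ multiples of $R_{n,k}$) is a perturbation of the form $\epsilon M_\ell(\epsilon)$ with $M_\ell(\epsilon)\succeq 0$ for $\epsilon$ small; the point is that $M_\ell(0)=\ell R_{n,k}$ restricted to the kernel $V_\ell$ is strictly positive definite. This last point is the crux: one must show that no nonzero kernel vector of $A_n^{\circ\ell}$ is supported entirely on the first $n-k$ coordinates. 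Equivalently, the top-left $(n-k)\times(n-k)$ principal submatrix of $A_n^{\circ\ell}$ is nonsingular (hence positive definite). But that submatrix is $A_{n-k}^{\circ\ell}$ with nodes $x_1,\dots,x_{n-k}$, and by Jain's theorem (applied with $n$ replaced by $n-k$), $A_{n-k}^{\circ\ell}$ is positive definite precisely when $\ell\ge (n-k)-2$, i.e. $\ell\ge n-k-2$ — which holds for all $\ell$ in our range. Standard perturbation theory for a PSD matrix perturbed by $\epsilon\,(\text{something positive definite on the kernel})$ then gives $A_{n,k,\epsilon}^{\circ\ell}\succ 0$ for small $\epsilon>0$, and by continuity of the smallest eigenvalue in $\alpha$ (with $\epsilon$ fixed), $[\ell-\delta,\ell+\delta]\subset S_{A_{n,k,\epsilon}}$ for some $\delta=\delta(\epsilon)>0$.

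Next, the existence of $\alpha_\ell\in(\ell,\ell+1)$ with $\alpha_\ell\notin S_{A_{n,k,\epsilon}}$: here I would start from the fact that for non-integer $\alpha\in(\ell,\ell+1)\subset(0,n-2)$, the matrix $A_n^{\circ\alpha}$ has a strictly negative eigenvalue $\lambda_{\min}(A_n^{\circ\alpha})=-c_\alpha<0$. Since $A_{n,k,\epsilon}^{\circ\alpha}=A_n^{\circ\alpha}+\epsilon(\text{bounded})$ uniformly for $\alpha$ in a compact subinterval, choosing $\epsilon$ small compared to $\inf_\alpha c_\alpha$ over a suitable compact set keeps some eigenvalue negative; one fixes $\alpha_\ell$ to be, say, the midpoint $\ell+\frac12$ (or any non-integer achieving a quantitative negative eigenvalue bound), and the uniform smallness of $\epsilon$ across all the finitely many $\ell$ is what the phrase "sufficiently small (depending on $A_n$)" encodes. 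Finally, the claim $S_{A_{n,k,\epsilon}}\cap[0,n-k-2]=\{0,1,\dots,n-k-2\}$: for $\alpha\in[0,n-k-2]$ non-integer, I would restrict to the top-left $(n-k)\times(n-k)$ principal submatrix, which is $A_{n-k,\epsilon}^{\circ\alpha}$ where this submatrix of the perturbation $\epsilon R_{n,k}$ vanishes on the first $n-k$ coordinates\,—\,wait, more carefully: the first $n-k$ diagonal entries of $R_{n,k}$ are $0$, so the top-left $(n-k)\times(n-k)$ block of $A_{n,k,\epsilon}$ is \emph{exactly} $A_{n-k}$, with no $\epsilon$-dependence at all. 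Hence its $\alpha$th Hadamard power is $A_{n-k}^{\circ\alpha}$, which by Jain's theorem (with $n\mapsto n-k$) fails to be p.s.d. for non-integer $\alpha\in(0,(n-k)-2]=(0,n-k-2]$. Since a principal submatrix of a p.s.d. matrix is p.s.d., this forces $\alpha\notin S_{A_{n,k,\epsilon}}$, while the integers $0,1,\dots,n-k-2$ lie in $S_{A_{n,k,\epsilon}}$ by the Schur product theorem. Combining gives the stated equality.

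The main obstacle I anticipate is the first-order perturbation argument for the intervals around the $\ell$'s: one has to make precise that $\lambda_{\min}(A_n^{\circ\ell}+\epsilon M_\ell(\epsilon))>0$ for small $\epsilon>0$, which requires both that $M_\ell(\epsilon)\succeq0$ near $\epsilon=0$ (clear, since it is a nonnegative-coefficient polynomial in $\epsilon$ with matrix values nonnegative multiples of the PSD matrix $R_{n,k}$) and that $M_\ell(0)=\ell R_{n,k}$ is positive definite on $\ker A_n^{\circ\ell}$ — and the latter is exactly the nonsingularity of $A_{n-k}^{\circ\ell}$ that we get for free from Jain once $\ell\ge n-k-2$. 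One should also double-check the boundary exponent: for $\ell=n-k-1$ we need the top-left block $A_{n-k}^{\circ\ell}$ with $\ell=n-k-1\ge (n-k)-2$ to be positive definite, which holds; indeed by Jain the critical exponent for $(n-k)\times(n-k)$ matrices is $(n-k)-2$, and integers $\ge(n-k)-2$ give strictly positive definite $\alpha$th powers for these rank-two-type matrices. No genuinely new machinery is needed beyond Jain's theorem, the Schur product theorem, eigenvalue continuity, and elementary perturbation theory; the work is in bookkeeping the ranges of indices and the uniform choice of $\epsilon$.
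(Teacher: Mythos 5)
Your proposal is correct, and for the main part (the intervals around the integers $\ell$) it takes a cleaner route than the paper. Where you argue directly that $A_{n,k,\epsilon}^{\circ\ell}=A_n^{\circ\ell}+D_\epsilon$ with $D_\epsilon$ a p.s.d.\ diagonal perturbation (supported on the last $k$ coordinates) that is strictly positive on $\ker A_n^{\circ\ell}$ --- the kernel-avoidance coming from the nonsingularity of the top-left block $A_{n-k}^{\circ\ell}$ --- and then invoke continuity in $\alpha$, the paper instead reduces to the single exponent $n-k-1$ via the monoid property and runs a two-case analysis on the limiting behaviour of the minimal eigenvector of $A_{n,k,\epsilon}^{\circ\alpha}$ as $\alpha\to n-k-1$ (Case 1: the last $k$ components vanish in the limit, handled by $\det A_{n-k}^{\circ(n-k-1)}>0$; Case 2: they do not, handled by the uniform lower bound on the diagonal bump). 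The underlying ingredients are identical (Jain's Theorem for the nonsingularity of $A_{n-k}^{\circ(n-k-1)}$ and the strict positivity of the diagonal increments), but your ``p.s.d.\ plus a p.s.d.\ perturbation positive definite on the kernel is positive definite'' lemma packages them without any subsequence or eigenvector-convergence bookkeeping, and in fact yields positive definiteness at $\alpha=\ell$ for \emph{every} $\epsilon>0$, not just small $\epsilon$ (smallness of $\epsilon$ is only needed for the non-membership claims). Your treatments of the points $\alpha_\ell$ (uniform Weyl/continuity bound against $\lambda_{\min}(A_n^{\circ\alpha})<0$) and of $S_{A_{n,k,\epsilon}}\cap[0,n-k-2]$ (the unperturbed top-left block $A_{n-k}$ plus interlacing/principal-submatrix monotonicity and the Schur product theorem) coincide with the paper's.

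Two small inaccuracies to fix, neither of which breaks the argument. First, $A_n$ does \emph{not} have $1$'s on its diagonal (the $(i,i)$ entry is $1+x_i^2$), so the first-order term of $A_{n,k,\epsilon}^{\circ\ell}-A_n^{\circ\ell}$ is $\epsilon\,\ell\,\mathrm{diag}\bigl(0,\dots,0,(1+x_{n-k+1}^2)^{\ell-1},\dots,(1+x_n^2)^{\ell-1}\bigr)$ rather than $\epsilon\,\ell\,R_{n,k}$; this is still a positive diagonal matrix supported on the last $k$ coordinates, and in fact you need no Taylor expansion at all, since the exact difference is already such a matrix for every $\epsilon>0$. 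Second, $A_{n-k}^{\circ\ell}$ is positive definite precisely for $\ell>(n-k)-2$, not $\ell\ge(n-k)-2$: at $\ell=(n-k)-2$ the matrix has rank at most $\ell+1=n-k-1$ and is therefore singular. Since the smallest exponent you use is $\ell=n-k-1>(n-k)-2$, your conclusion stands, but the stated threshold should be corrected.
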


\begin{remark}
Note that $S_{A_{n, k, \epsilon}}$ may have more than $k$ interval components  but all such extra intervals should be contained in $[n-k-2,\,n-2]$. In other words, there is no interval before $n-k-2$.
\end{remark}
The proof of the above theorem is given in Section~\ref{sec:proofs}. The proof works for $k=n-1$ and $n$ as well. In those cases, there is interval around each of the integers from $1$ to $n-2$. Note that an interval containing $0$ is not possible in any of the above examples. We make the following remark on the maximum number of possible intervals in $S_A$ for  $A\in\PP_n^+$.
\begin{remark}\label{rem:3}
For any $A\in \PP_n^+$, $S_A$ can have at most $n!$ interval components. Indeed, note that the determinant of any $m\times m$ principal submatrix of $A^{\circ \alpha}$ is an exponential polynomial in $\alpha$ with at most $m!$ sign changes. Therefore by Proposition 3.2 of ~\cite{jain}, the number of zeros of determinants (as functions of $\alpha$) of all principal submatrices  combined is at most $\sum_{m=1}^n m!$, which is bounded by $2n!$. This implies that the total number of intervals in $S_A$ can be at most $n!$.
\end{remark}

If $A$ has arbitrary real (not necessarily non-negative) entries, then a natural extension of real Hadamard powers is now considered. Let $\PP_n$ denote the set of $n\times n$ p.s.d. matrices with arbitrary real entries. For $A=[a_{ij}]\in \PP_n$ and $\alpha \geq 0$, we define the matrix $|A|_{\circ}^{\circ \alpha}:=[|a_{ij}|^\alpha]$. In particular if $\alpha = 1$, then we denote the matrix $|A|_{\circ}^{\circ \alpha}$ by $|A|_{\circ}$. Now for $A\in \PP_n$, we define $$S_{|A|_{\circ}}:=\{\alp\ge 0\suchthat |A|_{\circ}^{\circ \alpha}\in \PP_n\}.$$ By the Schur product theorem and the fact that the matrix with all entries $1$ is p.s.d., we have that all the non-negative even integers belong to $S_{|A|_{\circ}}$ for every $A\in \PP_n$. Hiai (\cite{hiai}) proved an analogue of the
theorem of FitzGerald and Horn for $n \times n$ real positive semi-definite
matrices. He showed that $n-2$ is the least number for which $|A|_{\circ}^{\circ \alpha}\in \PP_n$ for every $A\in \PP_n$ and for every $\alpha \ge n-2$. For the case $\alpha<n-2$, Bhatia and Elsner~\cite{BE} studied an interesting class of $n \times n$ positive semi-definite Toeplitz matrices with real entries 
\begin{align} \label{def:C_n} C_n:= [\cos((i-j)\pi/n)]_{0\leq i,j\leq n-1}.\end{align}
They showed that for every even positive integer $n$, the matrix $|C_n|_{\circ}^{\circ\alpha}$ is not positive semi-definite if $n-4 < \alpha < n-2$. We note here that $S_{|A|_{\circ}}$ is also a monoid under addition and a closed subset of $[0, \infty)$, for any $A\in\PP_n$ for any $n$. Now from the result of~\cite{BE, hiai}, the monoid property and the fact that all the non-negative even integers belong to $S_{|A|_{\circ}}$ for every $A\in \PP_n$, it follows that $S_{|C_n|_{\circ}}=\{0, 2, \dots, n-4\}\cup[n-2,\infty)$ for even positive integers $n$. Jain~\cite{jain} studied these matrices for odd $n$ and proved that $S_{|C_n|_{\circ}}=\{0, 2, \dots, n-5\}\cup[n-3,\infty)$ for odd positive integers $n \ge 2$. Thus one has $\bigcap_{A\in \PP_n}S_{|A|_{\circ}}=\{0,2,\dots \}\cup[n-2,\infty)$ for all integers $n\ge 2$.

In this case also, one can ask about the possible form of the set $S_{|A|_{\circ}}$ for a fixed matrix $A\in\PP_n$. One can observe that if $A\in \PP_2$, then $S_{|A|_{\circ}}=[0,\,\infty)$. If $A\in \PP_3$, then it can be argued similarly as in the case of matrices with non-negative entries that $S_{|A|_{\circ}}=\{0\} \cup [\rho,\,\infty)$ for some $\rho\in[0,\,1]$. For $n\geq 4$, we give examples of $A\in \PP_n$ such that $S_{|A|_{\circ}}$ has more than one interval component. Let $n\ge 4$. Consider the matrix
 $$C_{n,\epsilon}=C
_n+\epsilon I_n, \quad \epsilon\geq 0,$$
where $C_n$ is the matrix defined in~\eqref{def:C_n} and $I_n$ is the identity matrix of order $n$. Note that $C_{n,\epsilon} \in \PP_n$ and so $[n-2, \infty) \subset S_{|C_{n,\epsilon}|_{\circ}}$ (follows from Theorem 5.1 of \cite{hiai}). We prove the following.
\begin{theorem}{\label{Thm cos}}
Fix integer $n\ge 4$. Let $\epsilon>0$ be sufficiently small (depending on $n$). Then there exists $\delta>0$ such that for any 
\begin{align*}
k\in \begin{cases}\{2,4,\dots,{n-4},n-2\}\,\,\, \text{  if  } \,n \,{  is \,\,even  },\\
\{2,4,\dots,{n-5},n-3\}\,\,\, \text{  if  } \,n \,{  is \,\,odd }, \end{cases}
\end{align*}
$[k-\delta, k+\delta]\subset S_{|C_{n,\epsilon}|_{\circ}}$. Also, for each such $k$ we have $k-1\notin S_{|C_{n,\epsilon}|_{\circ}}$. \end{theorem}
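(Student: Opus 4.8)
The plan is to mimic the strategy used for Theorem~\ref{thm1}, transporting the analysis of the perturbed matrix $C_{n,\epsilon}=C_n+\epsilon I_n$ to the known behavior of $C_n$ near the "gap" exponents. First I would recall the structure of $S_{|C_n|_\circ}$: for even $n$ one has $S_{|C_n|_\circ}=\{0,2,\dots,n-4\}\cup[n-2,\infty)$, so the isolated points are exactly $\{0,2,4,\dots,n-4\}$, and in a deleted neighborhood of each such even integer $k$ (and in particular at $k-1$) the matrix $|C_n|_\circ^{\circ\alpha}$ has a strictly negative eigenvalue; at $\alpha=k$ itself the matrix is p.s.d. but singular, with a one-dimensional (or low-dimensional) kernel. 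The odd-$n$ case is analogous with $S_{|C_n|_\circ}=\{0,2,\dots,n-5\}\cup[n-3,\infty)$. The key point, as in the proof of Theorem~\ref{thm1}, is that adding $\epsilon I_n$ to $C_n$ does not change whether $|{\cdot}|_\circ^{\circ\alpha}$ is p.s.d. at the integer exponents $\alpha=k$ when $\alpha$ is a non-negative even integer, because $|C_{n,\epsilon}|_\circ^{\circ k} = (C_{n,\epsilon})^{\circ k}$ differs from $C_n^{\circ k}$ only in the diagonal by the strictly positive perturbation coming from $\epsilon$, hence stays p.s.d. (indeed becomes "more" positive on the kernel directions).

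The main steps I would carry out are as follows. Step 1: Show $k\in S_{|C_{n,\epsilon}|_\circ}$ for each admissible even $k$ (including $k=0$), using that for even integer $\alpha$, $|A|_\circ^{\circ\alpha}=A^{\circ\alpha}$ and the Schur product theorem applied to the p.s.d. matrix $C_{n,\epsilon}$; so no perturbation smallness is needed here. Step 2: Establish the two-sided interval $[k-\delta,k+\delta]\subset S_{|C_{n,\epsilon}|_\circ}$. This is the continuity/stability argument: at $\alpha=k$ the matrix $|C_n|_\circ^{\circ k}$ is p.s.d. with some kernel $K$; the perturbed matrix $|C_{n,\epsilon}|_\circ^{\circ k}$ is p.s.d. and, restricted to $K$, is strictly positive definite of size $\gtrsim c(\epsilon)>0$ (one must check the $\epsilon$-perturbation is not annihilated on $K$, which should follow because the perturbation is diagonal and no vector in $K$ is supported off the last $k$ coordinates — or more simply because $I_n$ is positive definite so $\langle \epsilon I_n v,v\rangle >0$ for all $v\neq 0$; but one must be careful, since $|C_{n,\epsilon}|_\circ^{\circ k}$ is not simply $|C_n|_\circ^{\circ k}+\epsilon(\text{something p.s.d.})$ — rather $(C_n+\epsilon I_n)^{\circ k}=\sum_{j}\binom{k}{j}\epsilon^j C_n^{\circ(k-j)}\circ I_n^{\circ j}$, and the $j\geq 1$ terms are all p.s.d., with the $j=k$ term $\epsilon^k I_n\succ 0$, so in fact $|C_{n,\epsilon}|_\circ^{\circ k}\succeq |C_n|_\circ^{\circ k}+\epsilon^k I_n\succ 0$). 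Hence $|C_{n,\epsilon}|_\circ^{\circ k}$ is strictly positive definite, so by continuity of eigenvalues in $\alpha$ there is $\delta>0$ with $|C_{n,\epsilon}|_\circ^{\circ\alpha}\succ 0$ for $|\alpha-k|<\delta$. Step 3: Show $k-1\notin S_{|C_{n,\epsilon}|_\circ}$. Here one uses that for the unperturbed matrix, $|C_n|_\circ^{\circ(k-1)}$ has a strictly negative eigenvalue — this is precisely the content of the Bhatia--Elsner result (for even $n$, $n-4<\alpha<n-2$ fails, and more generally the odd integers below the critical exponent are excluded since $S_{|C_n|_\circ}$ contains only even integers in that range) together with Jain's odd-$n$ result. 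Since a strictly negative eigenvalue persists under small perturbation of the matrix entries, $|C_{n,\epsilon}|_\circ^{\circ(k-1)}$ still has a negative eigenvalue for $\epsilon$ small, hence $k-1\notin S_{|C_{n,\epsilon}|_\circ}$. Finally I would choose $\epsilon$ small enough (and then $\delta$) so that all finitely many conditions (one per admissible $k$) hold simultaneously.

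The main obstacle I expect is Step 2 — specifically verifying that $|C_n|_\circ^{\circ k}$ together with its kernel behaves well, i.e. that the matrix is genuinely p.s.d. at $\alpha=k$ (which is clear from $k\in S_{|C_n|_\circ}$) and, more delicately, that the perturbation does not interact with the absolute values in a way that breaks the binomial expansion argument. The subtlety is that $|C_n+\epsilon I_n|_\circ$ replaces the off-diagonal entries $\cos((i-j)\pi/n)$ by $|\cos((i-j)\pi/n)|$ and the diagonal $1$ by $1+\epsilon$; for even integer powers the absolute values are irrelevant so $|C_{n,\epsilon}|_\circ^{\circ k}=(C_{n,\epsilon})^{\circ k}=(C_n+\epsilon I_n)^{\circ k}$ and the binomial/Schur argument goes through cleanly, giving strict positivity. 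For non-integer $\alpha$ near $k$ one falls back on continuity of the smallest eigenvalue in $\alpha$ (property 5 of $S_A$, which holds verbatim for $S_{|A|_\circ}$). So the only real work is bookkeeping: confirming $C_n$ is p.s.d. (it is, being a Gram matrix of unit vectors $\cos,\sin$ evaluated at angles $i\pi/n$), confirming $C_{n,\epsilon}\in\PP_n$, and invoking the Bhatia--Elsner and Jain results for the failure at $k-1$. A secondary obstacle is the boundary case $k=n-2$ (even $n$) or $k=n-3$ (odd $n$): here "$k$" lies at the left endpoint of the semi-infinite component rather than being an isolated point, so "$[k-\delta,k+\delta]\subset S_{|C_{n,\epsilon}|_\circ}$" needs the left half $[k-\delta,k]$ to be checked — this follows because $|C_{n,\epsilon}|_\circ^{\circ k}\succ0$ strictly (again by the $\epsilon^k I_n$ term), so the smallest eigenvalue stays positive slightly below $k$ as well, while "$k-1\notin S_{|C_{n,\epsilon}|_\circ}$" for this $k$ is exactly the persistence of the Bhatia--Elsner/Jain negative eigenvalue at $\alpha=n-3$ (even $n$) resp.\ $\alpha=n-4$ (odd $n$).
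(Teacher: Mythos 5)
Your proposal is correct and follows essentially the same route as the paper: both exploit that $C_{n,\epsilon}$ differs from $C_n$ only on the diagonal, so that $|C_{n,\epsilon}|_{\circ}^{\circ\alpha}=|C_n|_{\circ}^{\circ\alpha}+((1+\epsilon)^\alpha-1)I_n$, which lifts the near-zero smallest eigenvalues near the admissible even integers while leaving the strictly negative eigenvalues at the odd integers $k-1$ negative for $\epsilon$ small, with the known sign pattern of $\lambda_{\min}(|C_n|_{\circ}^{\circ\alpha})$ supplied by Bhatia--Elsner, Jain, and the monoid property. The only cosmetic difference is that you get the interval around $k$ by proving strict positive definiteness at $\alpha=k$ (your binomial expansion is just this diagonal identity in disguise) and then invoking continuity in $\alpha$, whereas the paper bounds $|\lambda_{\min}(|C_n|_{\circ}^{\circ\alpha})|$ uniformly on $[k-\delta,k+\delta]$ and applies Weyl's inequality; both are valid.
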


We prove the above theorem in Section~\ref{sec:proof cos}. 
\subsection*{A numerical example.} We give an example which shows that one can get at least two disjoint intervals when only the last diagonal entry of $A_n$ is perturbed. We give the following numerical example when $n=4$. Take $x_i=1/(i+1)$, $i=1,\dots,4$ and $\epsilon=10^{-7}$. Let $A_4$ be defined as in~\eqref{def:A_n} with these $x_i$s and let
\begin{align*}
A_{4,1,\epsilon}:=A_4+ \epsilon R_{4,1},
\end{align*}
where $R_{4,1}$ is the matrix of order $4$ with $1$ in the last diagonal entry and $0$ otherwise. Note that for $\alpha >1$, all the leading principal minors of order upto $3$ of $A_{4,1,\epsilon}^{\circ \alpha}$ are positive (follows from Theorem 2.6 of~\cite{jain}). Therefore the matrix $A_{4,1,\epsilon}^{\circ \alpha}$ is positive definite if and only if $\det(A_{4,1,\epsilon}^{\circ \alpha}) >0$. Now we plot $\det(A_{4,1,\epsilon}^{\circ \alpha})$ to see that there are two disjoint intervals where $A_{4,1,\epsilon}^{\circ \alpha}$ is positive definite.
\begin{figure}[h!]
\includegraphics[scale=.8]{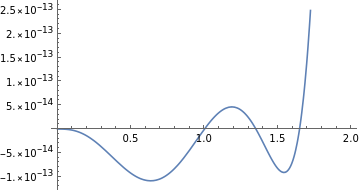}
\caption{$\det(A_{4,1,\epsilon}^{\circ \alpha})$ against $\alpha$.}
\end{figure}

We end this section with the following observation.
\begin{remark}
For two $n\times n$ matrices $A\geq B\geq 0$ with non-negative entries, define 
\begin{align*}
&S_{A,B}:=\{\alp\ge 0\suchthat A^{\circ \alpha}\geq B^{\circ \alpha}\} \\
&S'_{A,B}:=\{\alp\ge 0\suchthat (\lambda A+(1-\lambda) B)^{\circ \alpha}\leq \lambda A^{\circ \alpha}+ (1-\lambda)B^{\circ \alpha}\}.\end{align*}  Following Hiai (see Definition 1.1 of \cite{hiai}), $S_{A,B}$ (resp. $S'_{A,B}$) is the set of $\alp$ for which the function $x\mapsto x^\alp$ is Schur monotone (resp. Schur convex) for $A$ and $B$. From~\cite{FH77} and~\cite{hiai} we have
\begin{align*}
\bigcap_{A\geq B\geq 0} S_{A,B} =\{0, 1,\dots, n-2\} \cup [n-1, \infty)
\end{align*}
and
\begin{align*}
\bigcap_{A\geq B\geq 0} S^{'}_{A,B} =\{0, 1,\dots, n-1\} \cup [n, \infty).
\end{align*}
We remark that analogous results for these cases also hold. Indeed, Theorem \ref{thm1} shows that $S_{A_{n,k,\epsilon},0}$ and $S'_{A_{n,k,\epsilon},0}$ contain at least $k$ interval components each. Similar results hold for matrices with arbitrary real entries due to Theorem \ref{Thm cos}.
\end{remark}

\section{Proof of Theorem~\ref{thm1}}\label{sec:proofs}
In this section we prove Theorem~\ref{thm1}.
 \begin{proof}[Proof of Theorem~\ref{thm1}]
For any real symmetric matrix $M$, let $\lambda_{\min}(M)$ denote the smallest eigenvalue of $M$. We know that
\begin{align}\label{eq:min_eig}
\lambda_{\min}(M) = \min_{\|u\|=1} u^{T}Mu.
\end{align}
We first prove the existence of $\alpha_\ell$ for $\ell\in\{n-k-1, n-k,\dots,n-3\}$. From Theorem 2.6 of~\cite{jain} we have  
\begin{align}\label{eq:lam_min}
\lambda_{\min}(A_n^{\circ\alpha})\begin{cases}
    = 0 &   \alpha\in \{0,1,\dots,n-2\},\\
    < 0 &  0<\alpha<n-2,\alpha\notin \{1,2,\dots,n-2\}.
    \end{cases}
\end{align}
Note that $\lambda_{\min}(A_{n,k,\epsilon}^{\circ \alpha})$ is a continuous function in $\epsilon$ and $\alpha$. So, using~\eqref{eq:lam_min} we get the existence of the points $\alpha_\ell$ for small enough $\epsilon$.

 We fix such an $\epsilon$. Since $S_{A_{n,k,\epsilon}}$ is a monoid containing $1$, it is enough to prove the existence of $\delta$ such that $[n-k-1-\delta,\, n-k-1+\delta] \subset S_{A_{n,k,\epsilon}}$. Let $v^{(\alpha)} = (v_1^{(\alpha)},\dots, v_n^{(\alpha)})^T$ be a unit eigenvector corresponding to the eigenvalue $\lambda_{\min}(A_{n,k,\epsilon}^{\circ \alpha})$ of $A_{n,k,\epsilon}^{\circ \alpha}$. Now we consider the following two cases.

\textbf{Case 1:} Suppose all of $v_{n-k+1}^{(\alpha)}, v_{n-k+2}^{(\alpha)}, \dots ,v_{n}^{(\alpha)}$ go to $0$ as $\alpha$ tends to $n-k-1$.\\
Define $u^{(\alpha)}:= (v_1^{(\alpha)},\dots, v_{n-k}^{(\alpha)})$. Then we have
\begin{align}\label{eq:last}
\lambda_{\min}(A_{n,k,\epsilon}^{\circ \alpha}) &= (v^{(\alpha)})^T A_{n,k,\epsilon}^{\circ \alpha} v^{(\alpha)} = (u^{(\alpha)})^T A_{n-k}^{\circ \alpha} u^{(\alpha)} + h(\alpha)\nonumber\\
& \ge \lambda_{\min}(A_{n-k}^{\circ \alpha}) \|u^{(\alpha)}\|^2+ h(\alpha),
\end{align}
where $h(\alpha)$ is the sum of terms in $(v^{(\alpha)})^T A_{n,k,\epsilon}^{\circ \alpha} v^{(\alpha)}$ in which at least one of $v_{n-k+1}^{(\alpha)}, v_{n-k+2}^{(\alpha)}, \dots ,v_{n}^{(\alpha)}$ is present. The inequality in~\eqref{eq:last} follows from~\eqref{eq:min_eig}. Note that $h(\alpha) \rightarrow 0$ as $\alpha \rightarrow n-k-1$. Also, $\|u^{(\alpha)}\| \rightarrow 1$ as $\alpha \rightarrow n-k-1$.
Again, $A_{n-k}^{\circ(n-k-1)}$ is p.s.d. and from Theorem 2.6 of~\cite{jain} we have that $\det(A_{n-k}^{\circ(n-k-1)})>0$. So, $\lambda_{\min}(A_{n-k}^{\circ(n-k-1)}) >0$. Thus it follows from~\eqref{eq:last} that there exists $\delta>0$ such that $\lambda_{\min}(A_{n,k,\epsilon}^{\circ \alpha}) >0$ for all $\alpha\in[n-k-1-\delta,\, n-k-1+\delta]$.

\textbf{Case 2:} Suppose at least one of $v_{n-k+1}^{(\alpha)}, v_{n-k+2}^{(\alpha)}, \dots ,v_{n}^{(\alpha)}$does not go to $0$ as $\alpha$ tends to $n-k-1$. Without loss of generality, suppose there exists $\eta_1>0$ and a sequence $\alpha_m$ converging to $n-k-1$ such that $|v_{n}^{(\alpha_m)}| > \eta_1$ for all $m$.\\
We have
\begin{align*}
\lambda_{\min}(A_{n,k,\epsilon}^{\circ \alpha}) &= (v^{(\alpha)})^T A_{n,k,\epsilon}^{\circ \alpha} v^{(\alpha)}\\
& = (v^{(\alpha)})^T A_{n}^{\circ \alpha} v^{(\alpha)} +\sum_{i=1}^{k-1} \left( (1+x_{n-k+i}^2+\epsilon)^\alpha -(1+x_{n-k+i}^2)^\alpha \right)(v_{n-k+i}^{(\alpha)})^2 \\
&\qquad+ \left( (1+x_{n}^2+\epsilon)^\alpha -(1+x_{n}^2)^\alpha \right)(v_{n}^{(\alpha)})^2.
\end{align*}
Note that there exists $\eta_2>0$ such that $(1+x_{n}^2+\epsilon)^\alpha -(1+x_{n}^2)^\alpha > \eta_2$ for all $\alpha \in [n-k-2, n-k]$.
Then we have for all $m$ such that $\alpha_m\in[n-k-2, n-k]$
\begin{align*}
\lambda_{\min}(A_{n,k,\epsilon}^{\circ \alpha_m}) \ge (v^{(\alpha_m)})^T A_{n}^{\circ \alpha_m} v^{(\alpha_m)} + \eta_1^2\eta_2 \ge \lambda_{\min}(A_{n}^{\circ \alpha_m}) + \eta_1^2\eta_2. 
\end{align*}
The last inequality follows from~\eqref{eq:min_eig}. But $A_{n}^{\circ(n-k-1)}$ is p.s.d. and from Theorem 2.6 of~\cite{jain} we have that $\det(A_{n}^{\circ(n-k-1)})=0$. Hence $\lambda_{\min}(A_{n}^{\circ(n-k-1)})=0$. Therefore, letting $m$ go to infinity and using the continuity of the smallest eigenvalue we get that $\lambda_{\min}(A_{n,k,\epsilon}^{\circ(n-k-1)}) >0$.
This implies that there exists $\delta>0$ such that $\lambda_{\min}(A_{n,k,\epsilon}^{\circ \alpha}) >0$ for all $\alpha\in [n-k-1-\delta,\, n-k-1+\delta]$.
This completes the proof of the first part of the theorem.

Finally, note that from Theorem 2.6 of~\cite{jain} we have $\lambda_{\min}(A_{n-k}^{\circ \alpha}) <0$ for all non-integer $\alpha\in[0,\,n-k-2]$. Hence by Cauchy's interlacing theorem ( Theorem 4.3.17 of~\cite{HJ} ) we have $\lambda_{\min}(A_{n,k,\epsilon}^{\circ \alpha}) <0$ for all non-integer $\alpha\in[0,\,n-k-2]$. This proves the last part of the theorem.
 \end{proof}

\section{Proof of Theorem~\ref{Thm cos}}\label{sec:proof cos}
We now prove Theorem~\ref{Thm cos}.
\begin{proof}[Proof of Theorem~\ref{Thm cos}]
First suppose $n$ is an even integer. Since $S_{|C_n|_{\circ}}$ is a monoid containing $2$, Theorem $2$ of~\cite{BE} gives $\lambda_{\min}(|C_n|_{\circ}^{\circ\alpha}) < 0$ for   $0<\alpha<n-2$ with $\alpha\notin \{2,4,\dots,n-2\}$. Also, by the Schur product theorem we have $\lambda_{\min}(|C_n|_{\circ}^{\circ\alpha}) \ge 0$ for $\alpha\in\{2,4,\dots,n-2\}$. Therefore, using the continuity of $\lambda_{\min}(|C_n|_{\circ}^{\circ\alpha})$, we conclude that $\lambda_{\min}(|C_n|_{\circ}^{\circ\alpha}) = 0$ for $\alpha\in\{2,4,\dots,n-2\}$. Thus we have
\begin{align*}
\lambda_{\min}(|C_n|_{\circ}^{\circ\alpha})\begin{cases}
    = 0 &   \alpha\in \{0,2,4,\dots,n-2\},\\
    < 0 &  0<\alpha<n-2,\alpha\notin \{2,4,\dots,n-2\}.
    \end{cases}
\end{align*}

Let $$\lambda_0:= \min\{|\lambda_{\min}(|C_n|_{\circ}^{\circ(k+1)})|: 0\le k \le n-4, k\,\text{even}\, \}.$$
Define $$H_{n,\epsilon,\alpha}:=|C_{n, \epsilon}|_{\circ}^{\circ\alpha}-|C_n|_{\circ}^{\circ\alpha}= ((1+\epsilon)^\alpha -1) I_n$$ for $0\le \alpha \le n-2$.
We choose $\epsilon>0$ small enough so that $$\gamma:= \sup\{(1+\epsilon)^\alpha -1: 0\le \alpha \le n-2\} < \lambda_0.$$ For such choice of $\epsilon$, let $$\gamma_0:= \inf\{(1+\epsilon)^\alpha -1: r\le \alpha \le n-2\},$$
where $0<r<1$ fixed. Clearly, $\gamma_0 >0$. Now using the continuity of $\lambda_{\min}(|C_n|_{\circ}^{\circ\alpha})$ and the fact that $\lambda_{\min}(|C_n|_{\circ}^{\circ\alpha})=0$ for $\alpha\in \{2,4,\dots,n-2\}$, we find $\delta>0$ such that for $\alpha\in [k-\delta, k+\delta]$, $|\lambda_{\min}(|C_n|_{\circ}^{\circ\alpha})|\leq \gamma_0$ for $k\in\{2,4,\dots,n-4\}.$

We now use Weyl's inequality (Theorem 4.3.1 \cite{HJ}) to get bounds on the smallest eigenvalue of $|C_{n, \epsilon}|_{\circ}^{\circ\alpha}$. Note that for $0\le \alpha \le n-2$, $|C_{n, \epsilon}|_{\circ}^{\circ\alpha}= |C_n|_{\circ}^{\circ\alpha} + H_{n,\epsilon,\alpha}$. Therefore by Theorem 4.3.1 of~\cite{HJ} we have 
\begin{align*}
\lambda_{\min}(|C_n|_{\circ}^{\circ\alpha}) + \gamma_0 \le \lambda_{\min}(|C_{n,\epsilon}|_{\circ}^{\circ\alpha})\le \lambda_{\min}(|C_n|_{\circ}^{\circ\alpha}) + \gamma
\end{align*}
for $r\le \alpha \le n-2$.

Thus we proved that for $k\in\{2,4,\dots,n-4, n-2\}$, $\lambda_{\min}(|C_{n,\epsilon}|_{\circ}^{\circ\alpha})>0$ for $\alpha\in [k-\delta, k+\delta]$ and $\lambda_{\min}(|C_{n, \epsilon}|_{\circ}^{\circ(k-1)})<0$. This concludes the proof when $n$ is even.

Now suppose $n$ is odd. Then using Theorem $3.3$ of~\cite{jain} we get
\begin{align*}
\lambda_{\min}(|C_n|_{\circ}^{\circ\alpha})\begin{cases}
    = 0 &   \alpha\in \{0,2,4,\dots,n-3\},\\
    < 0 &  0<\alpha<n-3,\alpha\notin \{2,4,\dots,n-5\},\\
    >0  &  \alpha>n-3.
    \end{cases}
\end{align*}
The rest of the proof for this case is similar to the case when $n$ is even and is omitted.
\end{proof}

\subsection*{Acknowledgement.} The authors thank Manjunath Krishnapur for suggesting the question addressed in this article and for helpful discussions. They thank Apoorva Khare for his insightful suggestions on a draft of this article, and an anonymous referee for insightful remarks.

\bibliographystyle{abbrvnat}
\bibliography{biblio_lin_alg}

\end{document}